\documentclass{amsart}

\usepackage{tikz}
\usepackage{xcolor}

\usepackage{latexsym,amsmath,amssymb,epic}
\usepackage{latexsym,amssymb,epic}
\usepackage{amsmath,amsthm}
\usepackage{color}

\usepackage{soul}

\usepackage{hyperref}
\hypersetup{
	colorlinks=true, %set true if you want colored links
	linktoc=all, %set to all if you want both sections and subsections linked
	linkcolor=blue} %choose some color if you want links to stand out

\newtheorem{theorem}{Theorem}[section]
\newtheorem{lemma}[theorem]{Lemma}

\begin{document}

	\title[A Partition Function of Dombos]{Arithmetic Properties Satisfied by a Recent Integer Partition Function of Dombos}

	\author{Robson da Silva}
	\address{Universidade Federal de S\~ao Paulo, S\~ao Jos\'e dos Campos, SP 12247--014, Brazil}
	\email{silva.robson@unifesp.br}

	\author{James A. Sellers}
	\address{Department of Mathematics and Statistics, University of Minnesota Duluth, Duluth, MN 55812, USA}
	\email{jsellers@d.umn.edu}

	\subjclass[2010]{11P83, 05A17}
	
	\keywords{partitions, grounded partitions, congruences, generating functions, $q$--series, dissections}
	
	%\footnotetext[1]{} \vspace{0.5in}
	%\footnotetext[1]{2010 AMS Classification Numbers: Primary, 11P81; Secondary, 05A17.}

\begin{abstract}
In recent work of Dombos, the set of integer partitions of $n$ wherein the parts are either divisible by 4 or congruent to $\pm 1 \pmod{6}$ arose in a natural way.   In this work, we will denote the function which counts the number of such partitions of $n$ by $dp(n)$.  Using elementary generating function manipulations and classical $q$--series results, we prove several congruences satisfied by $dp(n)$.  As an example, we prove that, for all $\alpha \geq 1$ and all $n \geq  0$,
\begin{equation*}
    dp \left( 3^{2\alpha + 1}n + \frac{7 \cdot 9^\alpha + 1}{4} \right) \equiv 0 \pmod{3}.
\end{equation*}

\end{abstract}

	\maketitle
	
	%\noindent{\footnotesize{\bf Abstract.}

\section{Introduction} 

A partition of a positive integer $n$ is a nonincreasing sequence of positive integers $\lambda_1, \lambda_2, \dots, \lambda_r$ such that $n=\lambda_1+ \lambda_2+ \dots+ \lambda_r$.  We say that each $\lambda_i$, $1\leq i\leq r$, is called a part of the partition.  We denote the number of partitions of $n$ by $p(n)$.  

In the late 1910's, Ramanujan \cite{Ramanujan} proved the following congruences satisfied by $p(n)$ for the moduli $5, 7,$ and $11$ using generating function manipulations. 

\begin{theorem}%\label{ramanujanCongruences}
    For all $n\geq 0$, 
    \begin{align*}
        p(5n+4) &\equiv 0 \pmod{5}, \\
        p(7n+5) &\equiv 0 \pmod{7}, \text { and}\\
        p(11n+6) &\equiv 0 \pmod{11}.
    \end{align*}
\end{theorem}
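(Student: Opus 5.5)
Since this is Ramanujan's classical result, quoted only as background, I would follow Ramanujan's own generating-function route. Throughout, write $(q;q)_\infty=\prod_{k\ge1}(1-q^k)$, so that $\sum_{n\ge0}p(n)q^n=1/(q;q)_\infty$.

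\textbf{Modulus $5$.} Multiply through by $q$ and write
\[
\sum_{n\ge0}p(n)q^{n+1}=q\,\frac{(q;q)_\infty^4}{(q;q)_\infty^5}.
\]
By the binomial theorem mod $5$, $(q;q)_\infty^5\equiv(q^5;q^5)_\infty \pmod 5$. This denominator is a series in $q^5$. Hence it suffices to show that the coefficients of $q^{5n}$ in $q(q;q)_\infty^4=q(q;q)_\infty\cdot(q;q)_\infty^3$ vanish mod $5$.

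Expand the two factors using Euler's pentagonal number theorem and Jacobi's identity $(q;q)_\infty^3=\sum_{m\ge0}(-1)^m(2m+1)q^{m(m+1)/2}$. A typical term has exponent $1+k(3k-1)/2+m(m+1)/2$. Multiplying by $8$, the condition that this exponent is divisible by $5$ becomes
\[
2(6k-1)^2+(2m+1)^2\equiv 0\pmod 5.
\]
Since $2$ is a non-residue mod $5$, this forces $5\mid 2m+1$. The Jacobi coefficient $2m+1$ then kills the term mod $5$, which proves the first congruence.

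\textbf{Modulus $7$.} Write $q^2/(q;q)_\infty=q^2(q;q)_\infty^6/(q;q)_\infty^7$ and use $(q;q)_\infty^7\equiv(q^7;q^7)_\infty \pmod 7$. Expand $(q;q)_\infty^6=\bigl((q;q)_\infty^3\bigr)^2$ with Jacobi's identity. A term has exponent $2+m(m+1)/2+l(l+1)/2$, and divisibility by $7$ becomes
\[
(2m+1)^2+(2l+1)^2\equiv 0\pmod 7.
\]
Because $-1$ is a non-residue mod $7$, both $2m+1$ and $2l+1$ must be divisible by $7$. Each such term then carries the factor $(2m+1)(2l+1)\equiv0\pmod 7$, which gives the second congruence.

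\textbf{Modulus $11$ and the main obstacle.} The natural analogue is to write $q^5(q;q)_\infty^{10}/(q;q)_\infty^{11}$. Here the difficulty is that $(q;q)_\infty^{10}$ has no simple single-sum expansion of Jacobi type. I would use Winquist's identity, which expresses $(q;q)_\infty^{10}$ as a double series with polynomial weights. The step to check is the quadratic-form congruence: when the exponent $\equiv 0\pmod{11}$, the weight must be divisible by $11$.

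As a fallback, I would use the modular-forms argument. One shows that the relevant dissection component of $\eta^{10}$-type forms lies in a space of cusp forms that vanishes. Equivalently, one verifies that it is a $U_{11}$-image of a form whose Sturm bound is small, and checks finitely many coefficients. This third case is the hard part; the first two are routine given the identities of Euler and Jacobi.
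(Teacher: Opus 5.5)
The paper gives no proof of this theorem. It is quoted from Ramanujan purely as motivation, so your proposal has to stand on its own.

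For the moduli $5$ and $7$ it does. The reductions $(q;q)_\infty^5\equiv(q^5;q^5)_\infty \pmod 5$ and $(q;q)_\infty^7\equiv(q^7;q^7)_\infty \pmod 7$ are correct. So are the congruence $8E\equiv 2(6k-1)^2+(2m+1)^2 \pmod 5$ and the identity $8E=(2m+1)^2+(2l+1)^2+14$. The non-residue arguments also go through. For $5$, what you actually use is that $-2$ is a non-residue, which is equivalent to your statement since $-1\equiv 2^2 \pmod 5$. These are essentially Ramanujan's own proofs.

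The gap is the modulus $11$, where you give a plan rather than a proof. You never write down the expansion of $(q;q)_\infty^{10}$, so there is no quadratic form and no weight to test. The decisive step, that an exponent $\equiv 0 \pmod{11}$ forces a weight $\equiv 0 \pmod{11}$, is explicitly left unchecked. The modular-forms fallback names no form, level, weight or Sturm bound, so it cannot be verified either.

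To finish along your own lines you need an explicit double-series identity, for instance
\[
q^{5}(q^{12};q^{12})_\infty^{10}=\frac16\sum_{\substack{x>0\ \text{odd}\\ y>0\ \text{even}}}\left(\frac{x}{3}\right)\left(\frac{y}{3}\right)xy(x^2-y^2)\,q^{x^2+y^2}.
\]
This is one form of the double series that Winquist's identity produces. Each weight is divisible by $6$, and the sum is supported on $x^2+y^2\equiv 5 \pmod{12}$. The argument then runs as follows:
\begin{enumerate}
\item The coefficient of $q^{11k}$ in $q^5(q;q)_\infty^{10}$ equals the coefficient of $q^{11(12k-5)}$ on the right.
\item Since $-1$ is a non-residue mod $11$, $11\mid x^2+y^2$ forces $11\mid x$ and $11\mid y$. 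Hence each contributing term $xy(x^2-y^2)/6$ is divisible by $11^4$.
\item Combine this with $\sum_{n\ge 0}p(n)q^{n+5}=q^5(q;q)_\infty^{10}/(q;q)_\infty^{11}$ and $(q;q)_\infty^{11}\equiv(q^{11};q^{11})_\infty \pmod{11}$. This yields $p(11n+6)\equiv 0 \pmod{11}$.
\end{enumerate}
Without some such explicit identity, your third congruence remains unproved.
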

These congruences, and others that have since been proven for numerous restricted partition functions, serve as the motivation for this work.  

In recent work of Dombos \cite{Dombos}, the set of integer partitions of $n$ wherein the parts are either divisible by 4 or congruent to $\pm 1 \pmod{6}$ arose in a natural way.  (See \cite[Theorem 1.6]{Dombos} for more details.)  In this work, we will denote the function which counts the number of such partitions of $n$ by $dp(n)$.  In light of this definition, it is clear that the generating function for $dp(n)$ is given by the following:  
\begin{equation}
G(q):=\sum_{n=0}^{\infty} dp(n)q^n = \frac{1}{(q;q^6)_\infty (q^5;q^6)_\infty (q^4;q^4)_\infty},
\label{gf1}
\end{equation}
where $(a;q)_\infty := (1-a)(1-aq)(1-aq^{2})\cdots$
is the usual $q$-Pochhammer symbol. 

The primary goal of this brief note is to prove a number of Ramanujan--like congruences satisfied by the function $dp(n)$ in certain arithmetic progressions.  In the work below, we will prove the following results. 

\begin{theorem}
For all $n \geq 0$,
	\begin{align}
	dp(6n+4) & \equiv 0 \pmod{2}, \label{c1} \\ 
%    dp(27n+16) & \equiv 0 \pmod{3}, \label{c2} \\
    dp(18n+10) & \equiv 0 \pmod{4}, \label{c3} \\
    dp(54n+52) &  \equiv 0 \pmod{8}. \label{c4}
    \end{align}
\label{Th1}
\end{theorem}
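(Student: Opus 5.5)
The plan is to rewrite $G(q)$ as an eta quotient, then use classical $2$- and $3$-dissections to isolate the progressions in Theorem~\ref{Th1}. Write $f_k:=(q^k;q^k)_\infty$. The parts congruent to $\pm1 \pmod 6$ are exactly the parts coprime to $6$, so by inclusion--exclusion
\[
(q;q^6)_\infty(q^5;q^6)_\infty=\frac{f_1f_6}{f_2f_3}, \qquad\text{hence}\qquad G(q)=\frac{f_2f_3}{f_1f_4f_6}.
\]
All three congruences concern progressions with modulus divisible by $3$. I will therefore first $3$-dissect $G$, and afterwards perform a $2$-dissection or further $3$-dissections as needed.

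For \eqref{c1} I work modulo $2$. Using $f_k^2\equiv f_{2k}\pmod 2$, I get $f_2/f_4\equiv 1/f_2$ and $f_3/f_6\equiv 1/f_3$. Then $1/(f_1f_2)\equiv f_1/(f_1^2f_2)\equiv f_1/f_4$, so
\[
G(q)\equiv \frac{f_1}{f_3f_4}\pmod 2 .
\]
Next I insert Euler's $3$-dissection
\[
f_1=\frac{f_6f_9^2}{f_3f_{18}}-qf_9-q^2\frac{f_3f_{18}^2}{f_6f_9},
\]
together with the classical $3$-dissection of $1/f_1$ written in terms of $a(q)=f_6f_9^2/(f_3f_{18})$-type quotients, applied with $q\to q^4$. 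From the product I extract the terms $q^{3n+1}$ and replace $q^3\to q$. Finally I reduce modulo $2$ once more and $2$-dissect, looking for the odd-index part, since $6n+4=3(2n+1)+1$. I expect the odd part to vanish modulo $2$, because it should collapse to a function of $q^2$ times an even coefficient.

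For \eqref{c3} and \eqref{c4} I must work exactly, or at least modulo $4$ and $8$, so the mod $2$ shortcut is unavailable. Instead I factor
\[
G=\frac{f_3}{f_6}\cdot\frac{f_2}{f_1f_4}.
\]
The first factor is already a function of $q^3$. For the second, I use the known $3$-dissection of $f_2/f_1$-type quotients: write $\dfrac{f_2}{f_1f_4}=\dfrac{f_2^2}{f_1}\cdot\dfrac{1}{f_2f_4}$. Here $\psi(q)=f_2^2/f_1=A(q^3)+q\psi(q^9)$, and $1/(f_2f_4)$ is dissected via the $3$-dissection of $1/(f_1f_2)$ (equivalently from that of $f_1f_2$, namely $f_1f_2=\frac{f_6f_9^4}{f_3f_{18}^2}-qf_9f_{18}-2q^2\frac{f_3f_{18}^4}{f_6f_9^2}$) with $q\to q^2$.

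Extracting $q^{3n+1}$ gives a generating function for $dp(3n+1)$ as a sum of a few eta quotients, and I then $3$-dissect again. Since $18n+10=3(6n+3)+1$ and $54n+52=3(18n+17)+1$, this extraction is followed by $dp(3n+1)\to$ the $3n'+0$ piece (and then a $2$-dissection) for \eqref{c3}, and a further $3$-dissection step for \eqref{c4}. At each stage I simplify coefficients modulo $4$ or $8$ using $f_k^4\equiv f_{2k}^2\pmod 4$ and $f_k^8\equiv f_{2k}^4\pmod 8$, aiming to exhibit the target piece as $4$ (respectively $8$) times a power series.

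The main obstacle is \eqref{c4}. It needs three successive dissections, so the number of cross terms grows, and the factor $8$ has to emerge from combining several terms: the $-2q^2$ coefficient in the $f_1f_2$ dissection and the factors of $2$ from $2$-dissecting $\varphi$- or $\psi$-type pieces. My first attempt will be to reduce every intermediate quotient modulo $8$ as early as possible to keep the expressions manageable. If that fails, I would fall back on a modular-forms (Radu-type) finite check, but I would expect the elementary route to suffice.
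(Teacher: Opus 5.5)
Your proposal is an outline rather than a proof. None of the three congruences is actually derived: you write ``I expect the odd part to vanish'' and ``aiming to exhibit the target piece as $4$ (respectively $8$) times a power series'', and you keep a Radu-type fallback. So there is a genuine gap.

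The first missing ingredient is a single closed form for the $3n+1$ subsequence. The paper gets it in one line by passing to $H(q)=G(-q)=\frac{f_6^2}{f_3f_{12}}\cdot\frac{1}{\psi(q)}$ and applying the three-term dissection \eqref{3diss1/psi}. This gives $\sum h(3n+1)q^n=-\frac{f_1^2f_6^3}{f_2^5f_4}$; equivalently, Toh's \eqref{eq5} gives $\sum dp(3n+1)q^n=\frac{f_2f_6^3}{f_1^2f_4^3}$. Your product of the two-term dissection of $\psi(q)$ with the five-term dissection of $1/(f_2f_4)$ instead leaves the residue-$1$ part as a sum of several quotients. You would then have to dissect that sum twice more modulo $8$.

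With the single quotient in hand, the $2$-dissection \eqref{eq1061} gives exactly $\sum h(6n+4)q^n=2\frac{f_3^3f_8^2}{f_1^4f_2f_4}$, which is \eqref{c1}. Your mod-$2$ expectation is correct, since $\sum dp(3n+1)q^n\equiv f_6^3/f_4^3 \pmod 2$, but you never show it. Doing this $2$-dissection early is what makes the rest tractable. The global factor $2$ means the remaining quotient is needed only modulo $4$, where it collapses to $f_3^3f_4^3/f_2^3$. The dissection \eqref{eq1062} of $f_2^3/f_1^3$, with coefficients $1,3,6,12$, then puts $12\equiv 4 \pmod 8$ on the $q^{3n+1}$ piece, giving \eqref{c3}. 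In your order (three $3$-dissections, then a $2$-dissection), the series $\sum dp(27n+25)q^n$ still mixes the progressions $54n+25$ and $54n+52$. So no such global factor is available until the very end, and you must carry full modulo-$8$ information throughout.

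For \eqref{c4}, the mechanism you anticipate, factors of $2$ from various dissection coefficients accumulating to $8$, is not what happens. The $q^{3n+2}$ piece of $f_4^3/f_2^3$ (from \eqref{eq1062} with $q\to q^2$) carries the coefficient $3$, so
\[
\sum_{n\ge 0}h(18n+16)q^n\equiv 6\,\frac{f_1^3f_4^4f_6^5}{f_2^8f_{12}}\equiv 6\,\frac{f_6^5}{f_{12}}\,f_1^3 \pmod 8 .
\]
This series has only a single factor of $2$; indeed $dp(16)=30\equiv 6\pmod 8$. The divisibility by $8$ on $54n+52=18(3n+2)+16$ comes instead from an exact vanishing. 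By Jacobi's identity \eqref{Jacobi}, $f_1^3$ is supported on the exponents $n(n+1)/2$, which are never $\equiv 2\pmod 3$. Since $f_6^5/f_{12}$ is a series in $q^3$, the $q^{3n+2}$ coefficients are $0$. This lacunarity step, or some equivalent vanishing statement, is the key idea for \eqref{c4}, and your plan offers nothing in its place.
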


\begin{theorem} Let $p$ be a prime such that $p \equiv 17 \mbox{ or } 23 \pmod{24}$. Then for all $k,m \geq 0$ with $p \nmid m$, we have
\begin{equation*}
dp\left( 6p^{2k+1}m + \frac{15p^{2k+2}+1}{4} \right) \equiv 0 \pmod{4}.
\end{equation*}
\label{Th2}
\end{theorem}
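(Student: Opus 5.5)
The plan is to reduce Theorem \ref{Th2} to a statement about a single theta-type product modulo $2$, and then use a quadratic-form argument with $p$ inert.

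\textbf{Rewriting $G(q)$.} Write $f_k := (q^k;q^k)_\infty$. Since $(q;q^6)_\infty(q^5;q^6)_\infty = (q;q^2)_\infty/(q^3;q^6)_\infty = \frac{f_1 f_6}{f_2 f_3}$, equation \eqref{gf1} becomes
\begin{equation*}
G(q) = \frac{f_2 f_3}{f_1 f_4 f_6}.
\end{equation*}

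\textbf{The $6n+4$ subsequence.} First I perform a $2$-dissection and then a $3$-dissection of $G(q)$. For the $2$-dissection I will use the classical identity for $1/f_1$ in terms of $f_4,f_8,f_{16}$ and the like. For the $3$-dissection I will use the standard $3$-dissections of $f_2/f_1^2$ or $f_1 f_2$, or of $\psi(q)$ and $\varphi(-q)$. The goal is an explicit expression for $\sum_{n\ge0} dp(6n+4)q^n$. This is the same computation that proves \eqref{c1}, since that congruence says the series vanishes modulo $2$. Carrying it one level further, I expect to reach
\begin{equation*}
\sum_{n\ge 0} dp(6n+4)q^n \equiv 2F(q) \pmod 4,
\end{equation*}
where $F$ is a short product of theta functions. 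The natural candidate, after using $f_k^2\equiv f_{2k} \pmod 2$, is something like $\psi(q)\psi(q^{3})$ or $f_1 f_6$ times an even function, or a similar product.

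\textbf{Reducing to a quadratic form.} The arithmetic progression dictates what $F$ must satisfy. Set $6n+4 = 6p^{2k+1}m+\frac{15p^{2k+2}+1}{4}$. Then
\begin{equation*}
n = p^{2k+1}m + \frac{5(p^{2k+2}-1)}{8},
\end{equation*}
and therefore
\begin{equation*}
8n+5 = p^{2k+1}(8m+5p).
\end{equation*}
So I want to show that the coefficient of $q^n$ in $F$ is, modulo $2$, the number (or the number up to a fixed factor) of representations of $8n+5$, or of $3(8n+5)$, by a binary quadratic form of discriminant $-24$, namely $x^2+6y^2$ or $2x^2+3y^2$, with suitable congruence restrictions on $x,y$. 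This is done by expanding $F$ via the Jacobi triple product and the pentagonal number theorem and completing squares.

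\textbf{Applying the hypothesis on $p$.} For $p\equiv 17,23 \pmod{24}$ one checks $\left(\frac{-6}{p}\right)=-1$, so $p$ is inert in $\mathbb{Q}(\sqrt{-6})$. If $p\mid x^2+6y^2$ or $p\mid 2x^2+3y^2$, then $p\mid x$ and $p\mid y$, so $p^2$ divides the value. Iterating, any integer represented by either form has even $p$-adic valuation. Since $p\nmid m$, the number $8m+5p$ is prime to $p$, so $8n+5 = p^{2k+1}(8m+5p)$ has $p$-adic valuation exactly $2k+1$, which is odd. Hence it has no representations, the coefficient of $q^n$ in $F$ vanishes modulo $2$, and the congruence modulo $4$ follows.

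An alternative to the quadratic-form language is to use the $p$-dissections of $f_1$ and $\psi(q)$ directly to obtain $\sum dp\!\left(6pn+\tfrac{15p^2+1}{4}\right)q^n \equiv 2F(q^p)$ together with the vanishing of the off-progression terms, and then induct on $k$.

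\textbf{Main obstacle.} The hardest step is the dissection in the second step: obtaining a clean form of $\sum dp(6n+4)q^n$ modulo $4$, rather than merely modulo $2$, as $2$ times a single recognizable theta product. This requires careful bookkeeping of which factors can be reduced modulo $2$ only after the overall factor of $2$ has been extracted. If a direct $2$-then-$3$ dissection proves messy, I would first try dissecting in the opposite order, using the $3$-dissection of $f_2/(f_1f_4)$-type quotients.
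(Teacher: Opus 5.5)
Your outer framework matches the paper's. You obtain $\sum_{n\ge0} dp(6n+4)q^n \equiv 2F(q) \pmod 4$, read the coefficients of $F$ modulo $2$ as representation numbers of a form of discriminant $-24$, and kill them using $\left(\frac{-6}{p}\right)=-1$ and the odd $p$-adic valuation of $8n+5=p^{2k+1}(8m+5p)$. The paper uses $24n+15=3(8n+5)$ and $6x^2+y^2$, which is equivalent, and this last part of your argument is correct.

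The gap is that the step carrying all the content is never done. You do not derive the generating function for $dp(6n+4)$. Instead $F$ is guessed and then reverse-engineered from what the progression ``dictates,'' which is a heuristic, not a proof. Your leading candidate $\psi(q)\psi(q^3)$ is in fact wrong. Its coefficients count solutions of $8n+4=(2a+1)^2+3(2b+1)^2$, and along your progression $8n+4=p^{2k+1}(8m+5p)-1$ is prime to $p$, so no valuation argument applies to it.

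The missing computation goes in the order of your fallback plan. Apply the $3$-dissection \eqref{eq5} of $f_2/(f_1f_4)$ to $G=\frac{f_2}{f_1f_4}\cdot\frac{f_3}{f_6}$ to get $\sum_{n\ge0} dp(3n+1)q^n=\frac{f_2f_6^3}{f_1^2f_4^3}$. Then the $2$-dissection \eqref{eq2} of $1/f_1^2$ gives the exact identity
\[
\sum_{n\ge0} dp(6n+4)q^n = 2\,\frac{f_3^3f_8^2}{f_1^4f_2f_4},
\]
where the factor $2$ comes exactly from the odd part of \eqref{eq2}. Only then may you reduce the cofactor modulo $2$, via $f_1^4\equiv f_4$, $f_8^2\equiv f_4^4$ and $f_4^2\equiv f_2^4$. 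This gives $\sum_{n\ge0} dp(6n+4)q^n\equiv 2f_2^3f_3^3 \pmod 4$.

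By \eqref{Jacobi}, $f_1^3\equiv\psi(q)\pmod 2$, so $F\equiv\psi(q^2)\psi(q^3)$. Its coefficient of $q^n$ counts pairs $a,b\ge0$ with $a(a+1)+\frac{3b(b+1)}{2}=n$, i.e.\ $8n+5=2(2a+1)^2+3(2b+1)^2$. With this in hand, your inertness argument finishes the proof exactly as you wrote it.
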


\begin{theorem}
For all $n \geq 0$,
\begin{align}
dp(27n+16) & \equiv 0 \pmod{3}.
\label{c2}
\end{align}
\label{Th3}
\end{theorem}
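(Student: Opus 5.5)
The plan is a two-step $3$-dissection argument mod $3$, starting from a rewritten generating function. Since $(q;q^6)_\infty(q^5;q^6)_\infty = (q;q^2)_\infty/(q^3;q^6)_\infty$, writing $f_k := (q^k;q^k)_\infty$ turns \eqref{gf1} into
\[
G(q)=\frac{f_2f_3}{f_1f_4f_6}.
\]
Using $f_3\equiv f_1^3 \pmod 3$ we get $G(q)\equiv f_1^2f_2/(f_4f_6)$. With $\varphi(-q)=f_1^2/f_2$ and $\varphi(-q^2)=f_2^2/f_4$, this becomes
\[
G(q)\equiv \frac{\varphi(-q)\,\varphi(-q^2)}{f_6}\pmod 3 .
\]

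\emph{First dissection.} I will use the classical $3$-dissection
\[
\varphi(-q)=\varphi(-q^9)-2qW(q^3), \qquad W(q):=\frac{f_1f_6^2}{f_2f_3},
\]
together with its image under $q\mapsto q^2$. Multiplying the two dissections, only the product $-2qW(q^3)\varphi(-q^{18})$ contributes exponents $\equiv 1 \pmod 3$. Extracting these terms and replacing $q^3$ by $q$ gives
\[
\sum_{n\ge0} dp(3n+1)q^n \equiv -2\,\frac{f_1f_6^2\,\varphi(-q^6)}{f_2^2f_3}\pmod 3 .
\]
Now $f_2^3\equiv f_6$, so $1/f_2^2\equiv f_2/f_6$. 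Hence the right side is congruent to $-2\,f_1f_2\cdot F(q^3)$, where $F(q^3)=f_6\varphi(-q^6)/f_3$ is a function of $q^3$.

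\emph{Second dissection.} Next I use the known $3$-dissection
\[
f_1f_2=\frac{f_6f_9^4}{f_3f_{18}^2}-qf_9f_{18}-2q^2\frac{f_3f_{18}^4}{f_6f_9^2}.
\]
Since $27n+16=3(9n+5)+1$ and $9n+5\equiv 2\pmod 3$, I need the terms of exponent $\equiv 2\pmod 3$. These come only from the last piece of the dissection. Taking them, cancelling $f_3$ and $f_6$ against $F(q^3)$, and replacing $q^3$ by $q$ yields, up to a unit mod $3$,
\[
\sum_{n\ge0} dp(9n+7)q^n \equiv c\,\frac{f_6^4}{f_3^2}\,\varphi(-q^2)\pmod 3 .
\]

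\emph{Final step.} Apply the dissection once more, in the form $\varphi(-q^2)=\varphi(-q^{18})-2q^2W(q^6)$. The factor $f_6^4/f_3^2$ is a series in $q^3$, so the right-hand side has no terms with exponent $\equiv 1\pmod 3$. The coefficient of $q^{3n+1}$ is exactly $dp(9(3n+1)+7)=dp(27n+16)$, so it is $\equiv 0 \pmod 3$.

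The main obstacle is correctly quoting the two classical $3$-dissections, of $\varphi(-q)$ and of $f_1f_2$, which I take from the literature (Hirschhorn's book). Beyond that, the delicate part is the careful bookkeeping of which residue classes survive each extraction.
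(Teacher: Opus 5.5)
Your proof is correct. It follows the paper's three-stage skeleton: extract $dp(3n+1)$, then $dp(9n+7)$, then show that no $q^{3n+1}$ terms survive. It reaches the first two stages with different dissections.

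\emph{Checking your bookkeeping.} The $q^{3n+1}$-component of $\varphi(-q)\varphi(-q^2)$ is exactly $-2qW(q^3)\varphi(-q^{18})$. Your $3$-dissection of $f_1f_2$ is a correct known identity. The $q^{3n+2}$-extraction gives
$\sum dp(9n+7)q^n\equiv 4\,\frac{f_6^4}{f_3^2}\varphi(-q^2)\pmod 3$.
So $c=4\equiv 1$, and this is precisely the paper's intermediate congruence $\frac{f_6^4f_2^2}{f_3^2f_4}$. From there your final step is identical to the paper's use of \eqref{eq7}.

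\emph{Where the routes differ.} The difference is upstream.
The paper first derives the \emph{exact} identity $\sum dp(3n+1)q^n=\frac{f_2f_6^3}{f_1^2f_4^3}$ from Toh's dissection \eqref{eq5} of $\frac{f_2}{f_1f_4}$. Only then does it reduce modulo $3$ and apply Toh's dissection \eqref{eq6} of $\frac{f_2}{f_1^2}$.
You reduce modulo $3$ at the outset via $f_3\equiv f_1^3$, which turns $G$ into $\varphi(-q)\varphi(-q^2)/f_6$. Then the classical dissection of $\varphi(-q)$ (the paper's \eqref{eq7}) handles the first extraction, and the dissection of $f_1f_2$ handles the second. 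Since $\frac{f_2}{f_1^2}\equiv\frac{f_1f_2}{f_3}\pmod 3$, your second step is just the mod-$3$ version of the paper's.

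\emph{What each buys.} Your route is more economical for this theorem: it needs only the $\varphi(-q)$ dissection, used three times, plus the one for $f_1f_2$. The paper's route yields an exact generating function for $dp(3n+1)$. The paper reuses that for the mod-$4$ result of Theorem~\ref{Th2} via the $2$-dissection \eqref{eq2}, which your mod-$3$ congruence cannot supply. For the internal congruence of Theorem~\ref{internal_cong}, which is only modulo $3$, your congruence for $\sum dp(3n+1)q^n$ would serve equally well.
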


\begin{theorem}
For all $n \geq 0$,
\begin{equation*}
dp(27n+7) \equiv dp(3n+1) \pmod{3}.
\end{equation*}
\label{internal_cong}
\end{theorem}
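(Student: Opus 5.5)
We work entirely with generating functions, writing $f_k:=(q^k;q^k)_\infty$ and reducing modulo $3$ with the standard fact $f_{3k}\equiv f_k^3 \pmod 3$. The plan is to obtain a closed form modulo $3$ for $\sum_n dp(3n+1)q^n$, then extract $\sum_n dp(27n+7)q^n$ from it, and check that the two agree.

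\emph{Step 1: rewrite $G$.} Since $(q;q^6)_\infty(q^5;q^6)_\infty=(q;q^2)_\infty/(q^3;q^6)_\infty = f_1f_6/(f_2f_3)$, we get
\[
G(q)=\frac{f_2f_3}{f_1f_4f_6}\equiv \frac{f_1^2f_2}{f_4f_6}=\frac{\varphi(-q)\varphi(-q^2)}{f_6}\pmod 3,
\]
where $\varphi(-q)=f_1^2/f_2$.

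\emph{Step 2: first $3$-dissection.} We use the classical $3$-dissection
\[
\varphi(-q)=a(q^3)-2q\,b(q^3),\qquad a(q)=\frac{f_3^2}{f_6},\quad b(q)=\frac{f_1f_6^2}{f_2f_3},
\]
together with the analogous identity for $\varphi(-q^2)$. In the product $\varphi(-q)\varphi(-q^2)$, the only term with exponents $\equiv 1 \pmod 3$ is $-2q\,b(q^3)a(q^6)$. Dividing by $f_6$, replacing $q^3$ by $q$, and using $-2\equiv 1$, we obtain
\[
\sum_{n\ge0} dp(3n+1)q^n\equiv \frac{b(q)a(q^2)}{f_2}=\frac{f_1f_6^4}{f_2^2f_3f_{12}}=:H(q)\pmod 3 .
\]
Since $27n+7=3(9n+2)+1$, the claim is equivalent to $\sum_n h(9n+2)q^n\equiv H(q)$, where $h(n)$ denotes the coefficients of $H$.

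\emph{Step 3: extract the terms $q^{9n+2}$.} Modulo $3$ we have $f_1\equiv f_3/f_1^2$ and $f_2^{-2}\equiv f_2/f_6$, so
\[
H\equiv \frac{f_6^3}{f_{12}}\cdot\frac{1}{\varphi(-q)}\equiv \frac{f_6^3}{f_{12}}\cdot\frac{\varphi(-q)^2}{\varphi(-q^3)},
\]
using $\varphi(-q)^3\equiv\varphi(-q^3)$. In the expansion of $\varphi(-q)^2=(a-2qb)^2$, the exponents $\equiv 2 \pmod 3$ come only from $4q^2b(q^3)^2\equiv q^2b(q^3)^2$. Hence
\[
\sum_n h(3n+2)q^n\equiv b(q)^2\frac{f_2^3}{f_4}\cdot\frac{f_2}{f_1^2}=\frac{f_6^4}{f_3^2}\cdot\frac{f_2^2}{f_4}=\frac{f_6^4}{f_3^2}\,\varphi(-q^2).
\]
The factor $f_6^4/f_3^2$ is a series in $q^3$, and the part of $\varphi(-q^2)$ with exponents divisible by $3$ is $a(q^6)=f_{18}^2/f_{36}$. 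Therefore
\[
\sum_n h(9n+2)q^n\equiv \frac{f_2^4}{f_1^2}\cdot\frac{f_6^2}{f_{12}}\pmod 3 .
\]

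\emph{Step 4: compare.} Write $f_2^4/f_1^2\equiv f_1f_2^4/f_3$. Its ratio to $H$ is then $f_2^6/f_6^2\equiv 1 \pmod 3$, which proves the theorem.

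The main obstacle is bookkeeping rather than ideas. One must get the $3$-dissection of $\varphi(-q)$ right, including the exact forms of $a$ and $b$. One must also apply the mod $3$ replacements only to the factors that are not being dissected, so that no non-$q^3$-series is altered before extraction. I would double-check the dissection formula against the first few coefficients of $\varphi(-q)$ before relying on it.
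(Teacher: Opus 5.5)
Your proof is correct. Every congruence checks out:
\begin{itemize}
\item your $H(q)=\frac{f_1f_6^4}{f_2^2f_3f_{12}}$ agrees modulo $3$ with the paper's exact formula \eqref{eq3n+1};
\item your intermediate $\frac{f_6^4}{f_3^2}\cdot\frac{f_2^2}{f_4}$ is exactly the paper's expression for $\sum dp(9n+7)q^n$;
\item your final $\frac{f_2^4f_6^2}{f_1^2f_{12}}$ is exactly the paper's expression for $\sum dp(27n+7)q^n$.
\end{itemize}
So your skeleton is the paper's: three successive $3$-dissections, then a comparison.

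What differs is the toolkit. The paper obtains $\sum dp(3n+1)q^n$ exactly from Toh's dissection \eqref{eq5} of $f_2/(f_1f_4)$. It then extracts the $q^{3n+2}$ terms with Toh's dissection \eqref{eq6} of $f_2/f_1^2$, and uses the dissection \eqref{eq7} of $\varphi(-q)=f_1^2/f_2$ only in the last step.

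You instead reduce modulo $3$ from the outset: $f_3\equiv f_1^3$ turns $G$ into $\varphi(-q)\varphi(-q^2)/f_6$. You then replace the $f_2/f_1^2$ dissection by the Frobenius trick $1/\varphi(-q)\equiv\varphi(-q)^2/\varphi(-q^3)$. As a result, the only dissection you ever need is \eqref{eq7}. That identity follows at once from splitting $\sum(-1)^nq^{n^2}$ by $n \bmod 3$ and applying the triple product to $f(-q,-q^5)=\frac{f_1f_6^2}{f_2f_3}$.

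Your route is more elementary and self-contained, but it yields information only modulo $3$. The paper's route gives the exact identity \eqref{eq3n+1}, which it also needs for the modulo $4$ result of Theorem \ref{Th2}. In the paper's context, that dissection therefore costs nothing extra.

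Your closing caution is unnecessary. A coefficientwise congruence between integer power series survives any extraction of the terms $q^{3n+r}$, so you may make modulo $3$ replacements anywhere. You do exactly this in Step 1, where you replace the $q^3$-series $f_3$ by $f_1^3$. All that matters is that the factor you pull outside an extraction is a series in $q^3$, at least modulo $3$.
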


\begin{theorem}
For all $\alpha \geq 1$ and all $n \geq  0$,
\begin{equation*}
    dp \left( 3^{2\alpha + 1}n + \frac{7 \cdot 9^\alpha + 1}{4} \right) \equiv 0 \pmod{3}.
\end{equation*}
\label{Th4}
\end{theorem}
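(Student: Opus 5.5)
The plan is a straightforward induction on $\alpha$. The base case is Theorem~\ref{Th3}, and the inductive step comes from the internal congruence of Theorem~\ref{internal_cong}.

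For $\alpha=1$ we have
\[
3^{3}n+\frac{7\cdot 9+1}{4}=27n+16,
\]
so the claim is exactly Theorem~\ref{Th3}.

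For the inductive step, assume the statement holds for some $\alpha\ge 1$ and all $n\ge 0$. Write
\[
N_\alpha(n):=3^{2\alpha+1}n+\frac{7\cdot 9^\alpha+1}{4}.
\]
I want a nonnegative integer $m$ with $3m+1=N_\alpha(n)$ and $27m+7=N_{\alpha+1}(n)$. Then Theorem~\ref{internal_cong} gives
\[
dp(N_{\alpha+1}(n))=dp(27m+7)\equiv dp(3m+1)=dp(N_\alpha(n))\equiv 0 \pmod{3},
\]
which completes the induction.

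The natural choice is
\[
m=\frac{N_\alpha(n)-1}{3}=3^{2\alpha}n+\frac{7\cdot 9^\alpha-3}{12}.
\]

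The only thing to verify is that this $m$ is an integer and that it produces the right progression.

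\emph{Integrality.} Since $9^\alpha\equiv 1\pmod 4$, we get $7\cdot 9^\alpha-3\equiv 0\pmod 4$. Since $\alpha\ge 1$, $7\cdot 9^\alpha-3\equiv 0\pmod 3$. Hence $12$ divides $7\cdot 9^\alpha-3$, and clearly $m\ge 0$.

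\emph{Correct progression.} A direct computation gives
\[
27m+7=3^{2\alpha+3}n+\frac{9(7\cdot 9^\alpha-3)}{4}+7=3^{2\alpha+3}n+\frac{7\cdot 9^{\alpha+1}+1}{4}=N_{\alpha+1}(n).
\]

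So there is no real obstacle at this stage: all of the difficulty sits in the earlier results, Theorem~\ref{Th3} and Theorem~\ref{internal_cong}, which are assumed here. The only care needed is the arithmetic check that the progressions line up and that $m$ is integral.
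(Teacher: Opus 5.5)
Your proof is correct and matches the paper's argument: induction on $\alpha$, with Theorem~\ref{Th3} as the base case and Theorem~\ref{internal_cong} applied to $m = 3^{2\alpha}n + \frac{7\cdot 9^\alpha-3}{12}$ for the inductive step (the paper writes this as $3^{2\alpha}n + \frac{3\cdot 7\cdot 9^{\alpha-1}-1}{4}$). Your explicit check that $m$ is an integer is a detail the paper leaves implicit.
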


\section{Preliminaries}

Throughout the remainder of this paper, we define 
		$$f_k := (q^k;q^k)_{\infty}$$
in order to shorten the notation.

We begin by rewriting the generating function \eqref{gf1} in the following form.

\begin{lemma}
\begin{equation}
\sum_{n=0}^{\infty} dp(n)q^n = \frac{f_2f_3}{f_1f_4f_6}.
\label{gf2}
\end{equation}
\end{lemma}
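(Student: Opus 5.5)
We must show that $(q;q^6)_\infty (q^5;q^6)_\infty = \frac{f_1 f_6}{f_2 f_3}$. Substituting this into \eqref{gf1} then gives \eqref{gf2}, since $(q^4;q^4)_\infty = f_4$. The plan is to sieve the product $f_1=(q;q)_\infty$ according to residue classes modulo $6$ and recognise each piece as a quotient of the $f_k$.

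First split the factors of $f_1$ by the residue of the exponent mod $6$:
\begin{equation*}
f_1 = \prod_{r=0}^{5} (q^r;q^6)_\infty ,
\end{equation*}
where the $r=0$ factor is read as $(q^6;q^6)_\infty=f_6$. Next group the residues.

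\begin{itemize}
\item The even residues $r=0,2,4$ together give $\prod_{n\ge1}(1-q^{2n}) = f_2$.
\item The residues divisible by $3$, namely $r=0,3$, together give $f_3$.
\item Residue $0$ is counted in both of these groups, and its contribution is $f_6$.
\end{itemize}

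By inclusion--exclusion, the residues $1$ and $5$ (those coprime to $6$) contribute
\begin{equation*}
(q;q^6)_\infty (q^5;q^6)_\infty = \frac{f_1 f_6}{f_2 f_3}.
\end{equation*}
Explicitly, $f_1 = (q;q^6)_\infty(q^5;q^6)_\infty \cdot (q^3;q^6)_\infty \cdot f_2$, and $(q^3;q^6)_\infty = f_3/f_6$. Solving for the first factor gives the identity.

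Taking reciprocals and dividing by $f_4$ gives
\begin{equation*}
\sum_{n=0}^{\infty} dp(n)q^n = \frac{f_2f_3}{f_1f_4f_6}.
\end{equation*}

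There is no real obstacle. The only care needed is the bookkeeping of which residue classes are counted twice. Convergence is not an issue, because all products are formal power series in $q$ with constant term $1$.
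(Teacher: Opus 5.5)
Your proof is correct and follows essentially the same route as the paper: both sort the factors of $f_1$ by residue class modulo $6$ and identify the product over the classes $1$ and $5$ as $f_1f_6/(f_2f_3)$. The paper does the bookkeeping through $(q;q^3)_\infty(q^2;q^3)_\infty = f_1/f_3$ and $(q^2;q^6)_\infty(q^4;q^6)_\infty = f_2/f_6$, whereas you first split off $f_2$ by parity and then use $(q^3;q^6)_\infty = f_3/f_6$; this difference is only cosmetic.
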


\begin{proof}
Initially, we note that
\begin{align*}
\frac{1}{(q;q^6)_\infty (q^5;q^6)_\infty} & = \frac{(q^2;q^6)_\infty(q^3;q^6)_\infty(q^4;q^6)_\infty(q^6;q^6)_\infty}{(q;q^6)_\infty(q^2;q^6)_\infty(q^3;q^6)_\infty(q^4;q^6)_\infty(q^5;q^6)_\infty(q^6;q^6)_\infty} \\
& = \frac{(q^3;q^3)_\infty(q^2;q^6)_\infty(q^4;q^6)_\infty}{(q;q^3)_\infty(q^2;q^3)_\infty(q^3;q^3)_\infty} \\
& = \frac{(q^2;q^6)_\infty(q^4;q^6)_\infty}{(q;q^3)_\infty(q^2;q^3)_\infty} \\
& = \frac{(q^2;q^2)_\infty}{(q^6;q^6)_\infty}\cdot \frac{(q^3;q^3)_\infty}{(q;q)_\infty} \\
& = \frac{f_2f_3}{f_1f_6}.  
%& = \frac{(-q;q^3)_\infty(-q^2;q^3)_\infty(q;q^3)_\infty(q^2;q^3)_\infty}{(q;q^3)_\infty(q^2;q^3)_\infty} \\
%& = (-q;q^3)_\infty(-q^2;q^3)_\infty.
\end{align*}
Thus, it follows from \eqref{gf1} that
\begin{align*}
 \sum_{n=0}^{\infty} dp(n)q^n 
 %& = \frac{1}{f_4} (-q;q^3)_\infty(-q^2;q^3)_\infty \\
 %& = \frac{1}{f_4} \frac{(-q;q^3)_\infty(-q^2;q^3)_\infty (-q^3;q^3)_\infty}{(-q^3;q^3)_\infty} \\
 %& = \frac{1}{f_4} \frac{(-q;q)_\infty}{(-q^3;q^3)_\infty} \\
 & = \frac{f_2f_3}{f_1f_4f_6},
\end{align*}
which is \eqref{gf2}.
\end{proof}

The proofs of the main results mentioned above require a few well-known 2- and 3-dissections.

\begin{lemma}[\cite{R}, Lemma 1]
We have
\begin{align}
{f_{1}^{2}} & =  \frac{f_{2}f_8^5}{f_{4}^{2}f_{16}^{2}} -2q\frac{f_{2}f_{16}^{2}}{f_{8}}, \label{eq1061} \\
\frac{1}{f_{1}^{2}} & =  \frac{f_{8}^{5}}{f_{2}^{5}f_{16}^{2}} + 2q\frac{f_{4}^{2}f_{16}^{2}}{f_{2}^{5}f_{8}}.   
\label{eq2}
\end{align}
\end{lemma}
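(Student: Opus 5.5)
The plan is to obtain both identities from Ramanujan's theta functions
$$\varphi(q):=\sum_{n=-\infty}^{\infty} q^{n^2}, \qquad \psi(q):=\sum_{n=0}^{\infty} q^{n(n+1)/2}.$$
By the Jacobi triple product, in the notation $f_k$, these have the standard product forms
$$\varphi(q)=\frac{f_2^5}{f_1^2f_4^2}, \qquad \varphi(-q)=\frac{f_1^2}{f_2}, \qquad \psi(q)=\frac{f_2^2}{f_1}.$$
From these I read off
$$\varphi(q^4)=\frac{f_8^5}{f_4^2f_{16}^2}, \qquad \psi(q^8)=\frac{f_{16}^2}{f_8}, \qquad \varphi(-q^2)=\frac{f_2^2}{f_4}.$$

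The key elementary step is the 2-dissection
$$\varphi(q)=\varphi(q^4)+2q\,\psi(q^8).$$
To prove it, I split the sum defining $\varphi(q)$ according to the parity of $n$. The terms with $n=2m$ give $\sum_m q^{4m^2}=\varphi(q^4)$. The terms with $n=2m+1$ give $q^{4m^2+4m+1}=q\cdot (q^8)^{m(m+1)/2}$. As $m$ runs over all integers, each value of $m(m+1)/2$ with $m\ge 0$ is hit exactly twice (by $m$ and $-1-m$), so the odd terms sum to $2q\,\psi(q^8)$. Replacing $q$ by $-q$ then gives $\varphi(-q)=\varphi(q^4)-2q\,\psi(q^8)$.

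For \eqref{eq1061}, I write $f_1^2=f_2\,\varphi(-q)$ and substitute this dissection together with the product forms of $\varphi(q^4)$ and $\psi(q^8)$. This yields exactly $f_2f_8^5/(f_4^2f_{16}^2)-2q f_2f_{16}^2/f_8$.

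For \eqref{eq2}, I first verify from the product forms that $\varphi(q)\varphi(-q)=\varphi(-q^2)^2$. Both sides equal $f_2^4/f_4^2$, since $\frac{f_2^5}{f_1^2f_4^2}\cdot\frac{f_1^2}{f_2}=\frac{f_2^4}{f_4^2}$. Hence
$$\frac{1}{f_1^2}=\frac{1}{f_2\,\varphi(-q)}=\frac{\varphi(q)}{f_2\,\varphi(-q^2)^2}=\frac{f_4^2}{f_2^5}\,\varphi(q).$$
Inserting $\varphi(q)=\varphi(q^4)+2q\psi(q^8)$ and the product forms gives
$$\frac{1}{f_1^2}=\frac{f_8^5}{f_2^5f_{16}^2}+2q\frac{f_4^2f_{16}^2}{f_2^5f_8}.$$

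There is no real obstacle here. The only points needing care are the product representations of $\varphi$ and $\psi$, which I would quote as standard consequences of the triple product, and the bookkeeping of the two-to-one correspondence that produces the factor $2$ in front of $\psi(q^8)$.
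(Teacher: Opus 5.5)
Your proof is correct. There is no in-paper argument to compare it with, because the paper does not prove this lemma: it quotes it as Lemma 1 of the cited reference.

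Your derivation is a complete, self-contained proof. It rests on the parity split $\varphi(q)=\varphi(q^4)+2q\,\psi(q^8)$ together with the product forms of $\varphi$ and $\psi$. These are exactly the forms the paper records in its preliminaries. Your $\varphi(-q)=f_1^2/f_2$ follows from them via $(-q;-q)_\infty=f_2^3/(f_1f_4)$, an identity the paper itself uses in Section 3.

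The bookkeeping checks out:
\begin{itemize}
\item The odd-index terms $q^{(2m+1)^2}=q\,(q^8)^{m(m+1)/2}$ pair up under $m\leftrightarrow -1-m$, which gives the factor $2$.
\item Replacing $q$ by $-q$ leaves $q^4$ and $q^8$ unchanged, so $\varphi(-q)=\varphi(q^4)-2q\,\psi(q^8)$.
\item Multiplying by $f_2$ gives \eqref{eq1061}.
\end{itemize}

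One small simplification: for \eqref{eq2} you do not need the detour through $\varphi(q)\varphi(-q)=\varphi(-q^2)^2$. The product form $\varphi(q)=f_2^5/(f_1^2f_4^2)$ already gives $1/f_1^2=f_4^2\varphi(q)/f_2^5$ directly, and the rest of your computation goes through unchanged. In effect, your approach makes the paper self-contained at the cost of a few lines, whereas the paper relies on the literature for brevity.
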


%\begin{lemma}[\cite{H}, Eq. (30.10.3)]
%We have
%\begin{equation}
%\frac{f_3}{f_1} = \frac{f_4f_6f_{16}f_{24}^2}{f_2^2f_8f_{12}f_{48}} + q \frac{f_6f_8^2f_{48}}{f_2^2f_{16}f_{24}}.   
%\label{eq3}
%\end{equation}
%\end{lemma}

%\begin{lemma}[\cite{Yao}, Eq. (2.19)]
%We have
%\begin{equation}
%\frac{f_{4}}{f_{1}}  = \displaystyle\frac{f_{12}f_{18}^{4}}{f_{3}^3f_{36}^2} + q\displaystyle\frac{f_{6}^2f_{9}^{3}f_{36}}{f_{3}^4f_{18}^2} + 2q^2 \displaystyle\frac{f_{6}f_{18}f_{36}}{f_{3}^3}.   
%\label{eq4}
%\end{equation}
%\end{lemma}

\begin{lemma}[\cite{Toh}, Eq. (2.1c)]
We have
\begin{equation}
\frac{f_{2}}{f_{1}f_{4}}  = \displaystyle\frac{f_{18}^{9}}{f_{3}^2f_{9}^3f_{12}^2f_{36}^3} + q\displaystyle\frac{f_{6}^2f_{18}^{3}}{f_{3}^3f_{12}^3} +q^2 \displaystyle\frac{f_{6}^4f_{9}^3f_{36}^{3}}{f_{3}^4f_{12}^4f_{18}^3}.   
\label{eq5}
\end{equation}
\end{lemma}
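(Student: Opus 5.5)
The plan is to recognize the left-hand side as the reciprocal of a classical theta function, and then to invert a known 3-dissection. Recall Ramanujan's $\psi(q)=\sum_{n\ge0}q^{n(n+1)/2}=f_2^2/f_1$. Replacing $q$ by $-q$ and using $f_1(-q)=f_2^3/(f_1f_4)$ (with $f_{2k}$ unchanged), we get $\psi(-q)=f_1f_4/f_2$. Hence
\[
\frac{f_2}{f_1f_4}=\frac{1}{\psi(-q)}.
\]
It therefore suffices to 3-dissect $1/\psi(q)$ and then substitute $q\to -q$.

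\textbf{Step 1: 3-dissect $\psi(q)$.} Splitting the exponents $n(n+1)/2$ according to $n \bmod 3$ gives the classical identity
\[
\psi(q)=a(q^3)+q\,\psi(q^9),\qquad a(q)=\frac{f_2f_3^2}{f_1f_6}.
\]
The sum over $n\equiv 1 \pmod 3$ is exactly $q\psi(q^9)$. The remaining residues combine into a theta series in $q^3$, which is identified by the Jacobi triple product. So write $\psi(q)=A+qB$ with $A=f_6f_9^2/(f_3f_{18})$ and $B=f_{18}^2/f_9$.

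\textbf{Step 2: rationalize.} Using the factorization of a sum of cubes,
\[
\frac{1}{A+qB}=\frac{A^2-qAB+q^2B^2}{A^3+q^3B^3}.
\]
The numerator terms $A^2$, $AB$, $B^2$ are series in $q^3$. The denominator is also a function of $q^3$. So this is already a 3-dissection, provided $A^3+q^3B^3$ can be expressed as an eta quotient.

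\textbf{Step 3: the cubic identity (main obstacle).} I expect to need
\[
A^3+q^3B^3=\frac{\psi(q^3)^4}{\psi(q^9)}=\frac{f_6^8f_9}{f_3^4f_{18}^2},
\]
possibly up to the exact form of the quotient. I would verify this by checking that both sides are modular forms of the same weight and level (weight $3/2$ on $\Gamma_0(18)$ in $q^{1/3}$-normalized form) with matching behaviour at cusps, and then comparing enough coefficients via the Sturm bound. Alternatively, it follows from the Borweins' cubic theta identities, or from the known companion identity $\psi(q)^3=\ldots$ cited in Hirschhorn's book.

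\textbf{Step 4: assemble and substitute.} With this identity, the three components of $1/\psi(q)$ are
\[
\frac{A^2\psi(q^9)}{\psi(q^3)^4},\qquad -\frac{AB\,\psi(q^9)}{\psi(q^3)^4},\qquad \frac{B^2\psi(q^9)}{\psi(q^3)^4}.
\]
Finally replace $q\to -q$. This sends $q^3\to -q^3$ and flips the sign of the middle term. Then apply $f_k(-q)=f_{2k}^3/(f_kf_{4k})$ for odd $k$ (that is, to $f_3$ and $f_9$), leaving $f_6,f_{18}$ fixed. Routine simplification should yield the three displayed eta quotients:
\[
\frac{f_{18}^9}{f_3^2f_9^3f_{12}^2f_{36}^3},\qquad \frac{f_6^2f_{18}^3}{f_3^3f_{12}^3},\qquad \frac{f_6^4f_9^3f_{36}^3}{f_3^4f_{12}^4f_{18}^3}.
\]
A quick coefficient check of the first few terms serves as a sanity test on signs.
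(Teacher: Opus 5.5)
Your proof is correct. The paper gives no proof of this lemma (it simply quotes Toh's Eq.~(2.1c)), so your derivation is a genuinely different, self-contained route.

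Your Steps 1--3 reproduce exactly the other dissection the paper quotes, namely \eqref{3diss1/psi}. Indeed $A^2\psi(q^9)/\psi(q^3)^4=f_3^2f_9^3/f_6^6$, $AB\,\psi(q^9)/\psi(q^3)^4=f_3^3f_{18}^3/f_6^7$ and $B^2\psi(q^9)/\psi(q^3)^4=f_3^4f_{18}^6/(f_6^8f_9^3)$. So within the paper, the statement follows from \eqref{3diss1/psi} by your Step~4 alone, and that step checks out. The substitutions $f_3\mapsto f_6^3/(f_3f_{12})$ and $f_9\mapsto f_{18}^3/(f_9f_{36})$ carry the three terms of \eqref{3diss1/psi} onto precisely the three terms of \eqref{eq5}, with the middle sign becoming $+$.

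For a self-contained proof, Step~3 needs neither modular forms nor your hedge, because it has an elementary proof. Since $A$ and $B$ are power series in $q^3$, putting $\omega=e^{2\pi i/3}$ gives $\psi(\omega^kq)=A+\omega^kqB$, hence $A^3+q^3B^3=\prod_{k=0}^{2}\psi(\omega^kq)$. The product $\prod_{k=0}^{2}(1-\omega^{kn}q^n)$ equals $1-q^{3n}$ when $3\nmid n$ and $(1-q^n)^3$ when $3\mid n$. This gives $\prod_{k}f_1(\omega^kq)=f_3^4/f_9$ and likewise $\prod_{k}f_2(\omega^kq)=f_6^4/f_{18}$. Therefore $\prod_k\psi(\omega^kq)=f_6^8f_9/(f_3^4f_{18}^2)=\psi(q^3)^4/\psi(q^9)$, exactly the quotient you predicted.

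The paper's citation buys brevity. Your route buys an elementary derivation, and it shows the lemma is just the $q\to-q$ image of the $1/\psi(q)$ dissection, so the paper's two quoted dissections are really one.
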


\begin{lemma}[\cite{Toh}, Eq. (2.1b)]
We have
\begin{equation}
\frac{f_{2}}{f_{1}^2} = \frac{f_{6}^4f_{9}^6}{f_{3}^8f_{18}^3} + 2q\displaystyle\frac{f_{6}^3f_{9}^3}{f_{3}^7} + 4q^2\displaystyle\frac{f_{6}^2f_{18}^3}{f_{3}^6}.   
\label{eq6}
\end{equation}
\end{lemma}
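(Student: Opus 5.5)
The plan is to prove \eqref{eq6} by a cube-roots-of-unity argument applied to the classical theta function
$$\varphi(-q):=\sum_{n=-\infty}^{\infty}(-1)^nq^{n^2}=\frac{f_1^2}{f_2},$$
so that the left side of \eqref{eq6} is $1/\varphi(-q)$. The first step is the $3$-dissection of $\varphi(-q)$ itself. I would split the sum according to $n \bmod 3$. The terms with $n=3m$ give $\varphi(-q^9)=f_9^2/f_{18}$. The terms with $n=3m\pm1$ combine to $-2q\sum_{m}(-1)^m q^{9m^2+6m}$. By the Jacobi triple product this equals $-2q\,X(q^3)$ with
$$X(q^3)=\frac{f_3f_{18}^2}{f_6f_9}.$$
This gives $\varphi(-q)=a-b$, where $a=f_9^2/f_{18}$ and $b=2q\,f_3f_{18}^2/(f_6f_9)$.

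Next, let $\omega$ be a primitive cube root of unity. Replacing $q$ by $\omega q$ and $\omega^2 q$ fixes $a$ and multiplies $b$ by $\omega$ and $\omega^2$ respectively. Hence
$$\frac{1}{\varphi(-q)}=\frac{(a-\omega b)(a-\omega^2 b)}{a^3-b^3}=\frac{a^2+ab+b^2}{\varphi(-q)\varphi(-\omega q)\varphi(-\omega^2 q)}.$$
The denominator is computed from the product form. Using $\prod_{k=0}^{2}(1-\omega^{kn}q^n)=1-q^{3n}$ if $3\nmid n$ and $(1-q^n)^3$ if $3\mid n$, one gets $f_1(\omega q)f_1(\omega^2q)f_1(q)=f_3^4/f_9$. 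Likewise the corresponding product for $f_2$ is $f_6^4/f_{18}$. Therefore the denominator equals $f_3^8f_{18}/(f_6^4f_9^2)$.

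Finally, I would multiply $a^2$, $ab$ and $b^2$ by $f_6^4f_9^2/(f_3^8f_{18})$ and simplify. For example, $a^2$ gives $f_6^4f_9^6/(f_3^8f_{18}^3)$, and $ab$ gives $2q\,f_6^3f_9^3/f_3^7$, matching the first two terms of \eqref{eq6}. The $b^2$ term should likewise reduce to $4q^2f_6^2f_{18}^3/f_3^6$. The main obstacle is the bookkeeping in the Jacobi triple product evaluation of $\sum(-1)^mq^{9m^2+6m}$, since its sign and product form determine everything downstream. Once it is in hand, the remaining steps are mechanical.
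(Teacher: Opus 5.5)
Your proof is correct. It takes a different route from the paper only in the sense that the paper gives no proof of this lemma at all: it quotes Toh's Eq.~(2.1b).

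Your first step, $\varphi(-q)=a-b$ with $a=f_9^2/f_{18}$ and $b=2q\,f_3f_{18}^2/(f_6f_9)$, is exactly the paper's \eqref{eq7}, quoted from Berndt. Inside the paper you could cite \eqref{eq7} and skip the triple-product bookkeeping you flagged as the main obstacle. That bookkeeping is fine anyway. We have $\sum_m(-1)^mq^{9m^2+6m}=(q^3;q^{18})_\infty(q^{15};q^{18})_\infty(q^{18};q^{18})_\infty$. The first two factors run over odd multiples of $3$ not divisible by $9$, so they give $f_3f_{18}/(f_6f_9)$, and your value of $X(q^3)$ follows.

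The second step is sound. You multiply by $\varphi(-\omega q)\varphi(-\omega^2 q)$ and evaluate the norm $a^3-b^3=f_3^8f_{18}/(f_6^4f_9^2)$ directly from the product formula. This avoids needing a separate cubic theta-function identity for $a^3-b^3$. Both operations are legitimate in $\mathbb{C}[[q]]$: $q\mapsto\omega q$ is a ring automorphism, and $\varphi(-q)$ has constant term $1$. The term you left unchecked also works:
$4q^2\frac{f_3^2f_{18}^4}{f_6^2f_9^2}\cdot\frac{f_6^4f_9^2}{f_3^8f_{18}}=4q^2\frac{f_6^2f_{18}^3}{f_3^6}$.

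The citation buys brevity. Your route buys a self-contained argument showing that \eqref{eq6} is a formal consequence of \eqref{eq7}, which the paper already uses. It also explains the coefficients $1,2,4$: they come from $a^2+ab+b^2$, with $b$ carrying the factor $2q$.
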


\begin{lemma}[\cite{B1}, Eq. (22.4)]
We have
\begin{equation}
\frac{f_{1}^2}{f_{2}} = \displaystyle\frac{f_{9}^2}{f_{18}} - 2q\displaystyle\frac{f_{3}f_{18}^2}{f_{6}f_{9}}.   
\label{eq7}
\end{equation}
\end{lemma}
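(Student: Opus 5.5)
The plan is to prove \eqref{eq7} directly from the series side, using Gauss's identity and Jacobi's triple product, rather than manipulating products. Recall that
\[
\frac{f_1^2}{f_2} = \varphi(-q) = \sum_{n=-\infty}^{\infty} (-1)^n q^{n^2},
\]
which is the case $z=-1$ of the triple product
\[
\sum_{n} z^n q^{n^2} = (q^2;q^2)_\infty(-zq;q^2)_\infty(-q/z;q^2)_\infty .
\]
Indeed, $(q;q^2)_\infty^2 (q^2;q^2)_\infty = f_1^2/f_2$. The idea is to split the sum according to $n \bmod 3$ and to identify each of the resulting pieces as an eta-quotient.

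\emph{Step 1 (the terms with $3\mid n$).} For $n=3m$ we get $(-1)^{3m}q^{9m^2}=(-1)^m q^{9m^2}$. So this part of the sum is $\varphi(-q^9)=f_9^2/f_{18}$, which is the first term on the right of \eqref{eq7}.

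\emph{Step 2 (the terms with $n\equiv \pm1 \pmod 3$).} Write $n=3m+1$ and $n=-(3m+1)$. Both give $(-1)^{3m+1}q^{9m^2+6m+1}$, so together they contribute
\[
-2q\sum_{m}(-1)^m q^{9m^2+6m}.
\]

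\emph{Step 3 (identifying the remaining series).} Apply the triple product in the form
\[
\sum_m (-1)^m Q^{m^2} w^m = (Q^2;Q^2)_\infty (wQ;Q^2)_\infty (Q/w;Q^2)_\infty
\]
with $Q=q^9$ and $w=q^6$. This gives
\[
\sum_{m}(-1)^m q^{9m^2+6m} = f_{18}\,(q^{15};q^{18})_\infty (q^{3};q^{18})_\infty .
\]
Since $(q^3;q^6)_\infty = (q^3;q^{18})_\infty (q^9;q^{18})_\infty (q^{15};q^{18})_\infty$, we have
\[
(q^3;q^{18})_\infty(q^{15};q^{18})_\infty = \frac{(q^3;q^6)_\infty}{(q^9;q^{18})_\infty} = \frac{f_3/f_6}{f_9/f_{18}} .
\]
Hence the series equals $f_3f_{18}^2/(f_6f_9)$, which is the second term of \eqref{eq7}.

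The only step needing care is Step 3: choosing the specialization of the triple product correctly and simplifying the products $(q^a;q^{18})_\infty$ into $f_k$'s. Steps 1 and 2 are routine reindexing of the sum.
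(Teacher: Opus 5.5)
Your proof is correct and complete. Splitting $\sum_{n}(-1)^nq^{n^2}$ by $n \bmod 3$ gives $\varphi(-q^9)=f_9^2/f_{18}$ plus $-2q\sum_m(-1)^mq^{9m^2+6m}$; the classes $n=3m+1$ and $n=-(3m+1)$ do exhaust the nonzero residues, and the signs check out. Your specialization $Q=q^9$, $w=q^6$ of the triple product, together with $(q^3;q^6)_\infty=(q^3;q^{18})_\infty(q^9;q^{18})_\infty(q^{15};q^{18})_\infty$, then correctly turns that series into $f_3f_{18}^2/(f_6f_9)$.

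The paper gives no proof of this lemma; it simply imports the identity from the literature. There it is the $q\to-q$ case of the classical dissection $\varphi(q)=\varphi(q^9)+2qf(q^3,q^{15})$. Your argument is the standard derivation of that result. The only difference is that you make the lemma self-contained at the cost of a few lines, using nothing beyond Jacobi's triple product, which already underlies the product forms of $\varphi$ and $\psi$ that the paper uses.
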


%\begin{lemma}[\cite{H}, Eq. (14.8.5)]
%We have
%\begin{equation}
%f_{1}^{3}  =  \displaystyle\frac{f_{6}f_{9}^6}{f_{3}f_{18}^3} + 4q^3\displaystyle\frac{f_{3}^2f_{18}^{6}}{f_{6}^2f_{9}^3} - 3qf_{9}^{3}.   
%\label{eq8}
%\end{equation}
%\end{lemma}

\begin{lemma}[\cite{Toh}, Eq. (3.1)] We have
\begin{equation}
\displaystyle\frac{f_{2}^3}{f_{1}^3} = \displaystyle\frac{f_{6}}{f_{3}} +3q\displaystyle\frac{f_{6}^4f_{9}^{5}}{f_{3}^8f_{18}} +6q^2\displaystyle\frac{f_{6}^3f_{9}^{2}f_{18}^{2}}{f_{3}^7} +  12q^3\displaystyle\frac{f_{6}^2f_{18}^5}{f_{3}^6f_{9}}.  
\label{eq1062}
\end{equation}
\end{lemma}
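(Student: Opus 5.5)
We prove \eqref{eq1062} by writing the left side as a product of two factors whose $3$-dissections are known, multiplying, and sorting the terms by the exponent of $q$ modulo $3$. Since
$$\frac{f_2^3}{f_1^3}=\frac{f_2}{f_1^2}\cdot\frac{f_2^2}{f_1},$$
we can use \eqref{eq6} for the first factor. For the second factor, $\psi(q)=f_2^2/f_1$, we use the classical $3$-dissection
$$\frac{f_2^2}{f_1}=\frac{f_6f_9^2}{f_3f_{18}}+q\frac{f_{18}^2}{f_9}.$$
This identity is standard (Hirschhorn, Berndt) but is not stated earlier in the paper, so we will cite it.

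Multiplying the two dissections gives six products, with powers $q^0,q^1,q^2,q^3$. We collect them by residue class.

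\emph{The $q^1$ class.} Two products contribute:
$$\frac{f_6^4f_9^6}{f_3^8f_{18}^3}\cdot\frac{f_{18}^2}{f_9}=\frac{f_6^4f_9^5}{f_3^8f_{18}},\qquad 2\frac{f_6^3f_9^3}{f_3^7}\cdot\frac{f_6f_9^2}{f_3f_{18}}=2\frac{f_6^4f_9^5}{f_3^8f_{18}}.$$
Together they give the coefficient $3$.

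\emph{The $q^2$ class.} The contributions are $2f_6^3f_9^2f_{18}^2/f_3^7$ and $4f_6^3f_9^2f_{18}^2/f_3^7$, which give the coefficient $6$.

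\emph{The remaining terms.} These are the $q^0$ product $f_6^5f_9^8/(f_3^9f_{18}^4)$ and the $q^3$ product $4q^3f_6^2f_{18}^5/(f_3^6f_9)$. The $q^3$ product already supplies $4$ of the required $12$ in the last term of \eqref{eq1062}. So the lemma reduces to the single identity
$$\frac{f_6^5f_9^8}{f_3^9f_{18}^4}=\frac{f_6}{f_3}+8q^3\frac{f_6^2f_{18}^5}{f_3^6f_9}.$$

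Replacing $q^3$ by $q$ and dividing by $f_2/f_1$, this becomes the eta-quotient identity
$$\frac{f_2^4f_3^8}{f_1^8f_6^4}-1=8q\frac{f_2f_6^5}{f_1^5f_3}. \qquad (\ast)$$
Proving $(\ast)$ is the main obstacle. It is a two-term relation of the type underlying cubic theta function identities.

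We would first try to derive $(\ast)$ from the known identities in the Borwein cubic theta functions $a(q),b(q),c(q)$ and their $q^2$ analogues, or to locate it directly in Hirschhorn's \emph{The Power of $q$}. Failing that, we would rewrite $(\ast)$ as $F=1$ for a weight-zero eta quotient $F$ on $\Gamma_0(6)$. Equivalently, we would clear denominators to obtain a holomorphic modular form identity of modest weight on $\Gamma_0(6)$ or $\Gamma_0(12)$. We would then check the orders at the cusps, using Ligozat's criteria and the valence formula, and verify coefficients up to the Sturm bound.
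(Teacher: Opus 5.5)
The paper does not prove this lemma; it quotes it from Toh. Your reduction is correct. The six products and their sorting by residue class check out. Because the $q^{3n+1}$ and $q^{3n+2}$ components match identically, the lemma is \emph{equivalent} to $(\ast)$, given \eqref{eq6} and the $3$-dissection of $\psi(q)$. That is also the gap: everything not already contained in \eqref{eq6} now sits in $(\ast)$, and you only list strategies for it. As written, the proposal is a reduction, not a proof. A smaller point: $(\ast)$ is not of the form $F=1$ for a single weight-zero eta quotient. It is a linear relation $F_1-8F_2=1$ between two of them.

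The gap closes quickly along your modular route. Put $F_1=f_2^4f_3^8/(f_1^8f_6^4)$ and $F_2=q\,f_2f_6^5/(f_1^5f_3)$. Both satisfy the Newman--Ligozat conditions for modular functions on $\Gamma_0(6)$:
\begin{itemize}
\item $\sum r_\delta=0$;
\item $\sum\delta r_\delta=0$, resp.\ $24$;
\item $\sum(6/\delta)r_\delta=-24$;
\item $\prod\delta^{r_\delta}=3^4$, resp.\ $2^63^4$.
\end{itemize}
The cusp-order formula gives orders $-1,0,1,0$ for $F_1$ and $-1,0,0,1$ for $F_2$ at the cusps $0,\,1/2,\,1/3,\,\infty$. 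Hence $F_1-8F_2-1$ has at most one simple pole on $X_0(6)$, at the cusp $0$. On the other hand, $F_1=1+8q+40q^2+\cdots$ and $8F_2=8q+40q^2+\cdots$, so it vanishes to order at least $2$ at $\infty$. It is therefore identically zero.

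Equivalently, multiplying $(\ast)$ by $f_1^8/f_2^4$ gives $\phi(-q^3)^4-\phi(-q)^4=8q\,f_1^3f_6^5/(f_2^3f_3)$. This is an identity in $M_2(\Gamma_0(12))$ with Sturm bound $4$, and both sides equal $8q-24q^2+24q^3-24q^4+\cdots$. With this step written out, your argument becomes a self-contained derivation of the lemma from \eqref{eq6}, which the paper does not provide.
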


We recall Ramanujan's theta functions
\begin{align*}
	f(a,b) & :=\sum_{n=-\infty}^\infty a^\frac{n(n+1)}{2}b^\frac{n(n-1)}{2}, \mbox{ for } |ab|<1, \nonumber \\
	\phi(q) & := f(q,q) = \sum_{n=-\infty}^{\infty} q^{n^2} = \frac{f_2^5}{f_1^2f_4^2}, \\
	\psi(q) & := f(q,q^3) = \sum_{n=0}^{\infty} q^{n(n+1)/2} = \frac{f_2^2}{f_1}. 
\end{align*}
With the above definitions, we can state the following 3-dissection of $1/\psi(q)$.

%\begin{lemma}[\cite{Shane}, Eq. (14)]
%We have
%\begin{align}
%\frac{1}{f_{1}^{3}} & = \displaystyle\frac{f_{9}^{3}}{f_{3}^{12}} \left( f_{3}^2 \frac{\phi(-q^9)^6}{\phi(-q^3)^2} + 8q^3f_{3}^2 \frac{\phi(-q^9)^3\psi(q^9)^3}{\phi(-q^3)\psi(q^3)} + 16q^6f_{3}^2\frac{\psi(q^9)^6}{\psi(q^3)^2} \right. \label{eq9} \\
%& \ \ \ \ \ \ \ \ \ \ \left. + 3qf_{3}f_9^3\frac{\phi(-q^9)^3}{\phi(-q^3)} + 12q^4f_{3}f_9^3 \frac{\psi(q^9)^3}{\psi(q^3)} +9q^2f_9^6 \right).  \nonumber 
%\end{align}
%\end{lemma}

\begin{lemma}[\cite{Shane}, Lemma 2.2] We have
	\begin{equation}
	\frac{1}{\psi(q)} = \frac{f_3^2f_9^3}{f_6^6} - q\frac{f_3^3f_{18}^3}{f_6^7} + q^2\frac{f_3^4f_{18}^6}{f_6^8f_9^3}.
	\label{3diss1/psi}
	\end{equation}
\end{lemma}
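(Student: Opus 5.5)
The plan is to obtain \eqref{3diss1/psi} by inverting the classical $3$-dissection of $\psi(q)$ itself. Recall the well-known identity (see, e.g., Berndt's \emph{Ramanujan's Notebooks, Part III}, or Hirschhorn's \emph{The Power of $q$})
\begin{equation*}
\psi(q) = f(q^3,q^6) + q\,\psi(q^9), \qquad f(q^3,q^6) = \frac{f_6f_9^2}{f_3f_{18}}, \qquad \psi(q^9)=\frac{f_{18}^2}{f_9}.
\end{equation*}
It follows by splitting the exponents $n(n+1)/2$ according to $n \bmod 3$, together with the Jacobi triple product. Write $A := f_6f_9^2/(f_3f_{18})$ and $B := f_{18}^2/f_9$, so that $\psi(q) = A + qB$.

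Next I would use $a^3+b^3=(a+b)(a^2-ab+b^2)$ to write
\begin{equation*}
\frac{1}{\psi(q)} = \frac{A^2 - qAB + q^2B^2}{A^3 + q^3B^3}.
\end{equation*}
Here $A$ and $B$ are functions of $q^3$, so the denominator is a function of $q^3$. The numerator already has the required three-term shape, with coefficients of $q^0,q^1,q^2$ that are series in $q^3$.

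Comparing with the claimed coefficients reveals the key identity. From $A^2 = f_6^2f_9^4/(f_3^2f_{18}^2)$ and the desired first term $f_3^2f_9^3/f_6^6$, the denominator must be
\begin{equation*}
A^3+q^3B^3 = \frac{f_6^8f_9}{f_3^4f_{18}^2} = \frac{\psi(q^3)^4}{\psi(q^9)}.
\end{equation*}
I would check that this single denominator also gives the other two terms. For the middle term, $AB = f_6f_9f_{18}/f_3$, and dividing by the denominator gives $f_3^3f_{18}^3/f_6^7$, as claimed. For the last term, $B^2 = f_{18}^4/f_9^2$, and dividing gives $f_3^4f_{18}^6/(f_6^8f_9^3)$, also as claimed. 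So everything reduces to proving, after replacing $q^3$ by $q$,
\begin{equation*}
f(q,q^2)^3 + q\,\psi(q^3)^3 = \frac{\psi(q)^4}{\psi(q^3)}, \qquad f(q,q^2)=\frac{f_2f_3^2}{f_1f_6}.
\end{equation*}

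This cubic identity is the main obstacle. I would first try to derive it from the Borweins' cubic theta functions. One has $c(q^2)$-type expressions $\psi(q^3)^3/\psi(q)$ and analogues in terms of $f(q,q^2)$. Combined with the cubic relation $a^3=b^3+c^3$, suitably transcribed into eta quotients via $a(q)=a(q^4)+6q\psi(q^2)\psi(q^6)$-style relations, this should give the identity after dividing through by $\psi(q)^4/\psi(q^3)$.

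If that transcription becomes messy, the fallback is to treat it as a modular-form identity. Both sides, after multiplying by a suitable power of $q$, are eta quotients of weight $3/2$ on $\Gamma_0(6)$ with compatible character. So it suffices to verify agreement of coefficients up to the Sturm bound, which is a small finite check. Once this identity is in hand, substituting it into the displayed quotient yields \eqref{3diss1/psi} immediately.
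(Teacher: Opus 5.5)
The paper gives no proof of this lemma; it quotes it from Shane's Lemma 2.2, so there is no in-paper argument to compare against. Your reduction is the standard one: the $3$-dissection $\psi(q)=f(q^3,q^6)+q\psi(q^9)$, the sum-of-cubes trick, and the three quotients $A^2/D$, $AB/D$, $B^2/D$ all check out.

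\textbf{The gap.} Because $A^2-qAB+q^2B^2$ has constant term $1$, the lemma is \emph{equivalent} to $A^3+q^3B^3=f_6^8f_9/(f_3^4f_{18}^2)$, that is, to
\[
f(q,q^2)^3+q\,\psi(q^3)^3=\frac{\psi(q)^4}{\psi(q^3)}.
\]
So the whole content of the lemma has moved into this identity, and your proposal does not prove it.

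\begin{itemize}
\item The cubic-theta-function paragraph is a direction, not an argument: you say it ``should give the identity'' but give no derivation.
\item The fallback is misstated in two ways. Each term carries a $q^{1/8}$ offset, so these are not weight-$3/2$ forms on $\Gamma_0(6)$ with a character. Also, the left side is a sum of two eta quotients, not a single one. So there is no Sturm bound you can quote in the form you describe.
\end{itemize}

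\textbf{Repairing the fallback.} Pass to weight $0$. Divide the identity by $q\,\psi(q^3)^4$ and use $f(q,q^2)=f_2f_3^2/(f_1f_6)$. It becomes
\[
\frac{\eta(2\tau)^8\eta(3\tau)^4}{\eta(\tau)^4\eta(6\tau)^8}-\frac{\eta(2\tau)^3\eta(3\tau)^9}{\eta(\tau)^3\eta(6\tau)^9}=1 .
\]
\begin{itemize}
\item Both eta quotients have weight $0$ and satisfy $\sum\delta r_\delta\equiv\sum(6/\delta)r_\delta\equiv 0\pmod{24}$.
\item In each case $\prod\delta^{r_\delta}$ is a rational square ($3^{-4}$ and $2^{-6}$). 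Hence both are modular functions on $\Gamma_0(6)$ with trivial character, with no zeros or poles in $\mathbb{H}$.
\item By Ligozat's formula, at the cusps $0,1/2,1/3,\infty$ the first has orders $0,1,0,-1$ and the second has orders $0,0,1,-1$.
\item Their expansions begin $q^{-1}+4+\cdots$ and $q^{-1}+3+\cdots$.
\end{itemize}
The difference is therefore a modular function on $X_0(6)$ with no poles, hence constant, and the constant is $1$. This is a classical relation between two Hauptmoduln of $\Gamma_0(6)$. With this step supplied, your argument becomes a complete proof.
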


Lastly, we require the classical identity of Jacobi (see \cite[Theorem 1.3.9]{B1})
\begin{equation}
	f_1^3 = \sum_{n=0}^{\infty} (-1)^n (2n+1) q^{n(n+1)/2}.
\label{Jacobi}
\end{equation}
%and Euler's identity (see \cite[Eq. (1.6.1)]{H})
%\begin{equation}
%	f_1 = \sum_{n=-\infty}^{\infty} (-1)^n q^{n(3n-1)/2}.
%	\label{Euler}
%\end{equation}
%%%%%%%%%%%%%%%%%%%%%%%%%%%%%%%%%%%%
%%%%%%%%%%%%%%%%%%%%%%%%%%%%%%%%%%%%
%%%%%%%%%%%%%%%%%%%%%%%%%%%%%%%%%%%%

\section{Proof of Theorem \ref{Th1}}

Let 
$$H(q) = \sum_{n=0}^{\infty}h(n)q^n = G(-q) = \sum_{n=0}^{\infty}dp(n)(-q)^n.$$
Thus, $h(n) = (-1)^ndp(n)$ for all $n$. We note that
$$(-q;-q)_\infty = \frac{f_2^3}{f_1f_4}.$$
Using this identity and \eqref{gf2} we find that
\begin{equation*}
H(q) = \frac{f_1f_6^2}{f_2^2f_3f_{12}} = \frac{f_6^2}{f_3f_{12}} \frac{1}{\psi(q)}.
\end{equation*}
Thanks to \eqref{3diss1/psi} we obtain
\begin{align*}
\sum_{n=0}^{\infty}h(3n+1)q^{3n+1} & = -q \frac{f_3^2f_{18}^3}{f_6^5f_{12}},
\end{align*}
which, after dividing by $q$ and replacing $q^3$ by $q$, yields
\begin{align*}
\sum_{n=0}^{\infty}h(3n+1)q^{n} & = - \frac{f_1^2f_{6}^3}{f_2^5f_{4}}.
\end{align*}   
Using \eqref{eq1061}, we can 2-dissect the expression above to obtain
\begin{align*}
\sum_{n=0}^{\infty}h(6n+4)q^{2n+1} & = 2q \frac{f_6^3f_{16}^2}{f_2^4f_{4}f_8}.
\end{align*}
Thus,
\begin{align*}
\sum_{n=0}^{\infty}h(6n+4)q^{n} & = 2 \frac{f_3^3f_{8}^2}{f_1^4f_{2}f_4},
\end{align*}
which proves \eqref{c1} since $dp(6n+4) = h(6n+4) \equiv 0 \pmod{2}$.

Working modulo 8, we have
\begin{align}
\sum_{n=0}^{\infty}h(6n+4)q^{n} & = 2 \frac{f_3^3f_{8}^2}{f_1^4f_{2}f_4} \equiv 2 f_3^3\frac{f_4^3}{f_2^3} \pmod{8}.
\label{eq1063}
\end{align}
Thanks to \eqref{eq1062} we have
\begin{align*}
\sum_{n=0}^{\infty}h(18n+10)q^{3n+1} & \equiv 4q^4 \frac{f_3^3f_{12}^3f_{18}^2f_{36}^2}{f_6^7} \pmod{8},
\end{align*}
which simplifies to
\begin{align*}
\sum_{n=0}^{\infty}h(18n+10)q^{n} & \equiv 4q \frac{f_1^3f_{4}^3f_{6}^2f_{12}^2}{f_2^7} \pmod{8}.
\end{align*}
It follows that $h(18n+10) \equiv 0 \pmod{4}$, which completes the proof of \eqref{c3} since $h(n) = (-1)^n dp(n)$.

In order to prove \eqref{c4}, we use \eqref{eq1062} to extract the terms of the form $q^{3n+2}$ from \eqref{eq1063}. The resulting congruence is
\begin{align}
\sum_{n=0}^{\infty}h(18n+16)q^{n} & \equiv 6\frac{f_1^3f_{4}^4f_{6}^5}{f_2^8f_{12}}  \pmod{8}\nonumber \\
& \equiv 6\frac{f_6^5}{f_{12}} f_1^3 \pmod{8} \nonumber \\
& \equiv 6\frac{f_6^5}{f_{12}} \sum_{n=0}^{\infty} (-1)^n (2n+1) q^{n(n+1)/2} \pmod{8} \label{eq1064}
\end{align}
by \eqref{Jacobi}. We note that $f_6^5/f_{12}$ is a function of $q^3$ and $n(n+1)/2$ is never congruent to 2 modulo 3. Hence, once we expand \eqref{eq1064} as a power series in $q$, there will be no powers of the form $q^{3n+2}$. Therefore, for all $n \geq 0$, 
$$h(18(3n+2)+16) = h(54n+52) \equiv 0 \pmod{8},$$
which completes the proof of \eqref{c4}.

%%%%%%%%%%%%%%%%%%%%%%%%%%%%%%%%%%%%%%%%%%%
%%%%%%%%%%%%%%%%%%%%%%%%%%%%%%%%%%%%%%%%%%%
%%%%%%%%%%%%%%%%%%%%%%%%%%%%%%%%%%%%%%%%%%%

\section{Proof of Theorem \ref{Th2}}

Initially, we use \eqref{eq5} to extract the terms of the form $q^{3n+1}$ from \eqref{gf2}, which gives
$$\sum_{n=0}^{\infty} dp(3n+1)q^{3n+1} = q\frac{f_{6}f_{18}^3}{f_3^2f_{12}^3}.$$
After dividing by $q$ and replacing $q^3$ by $q$, we are left with
\begin{align}
\sum_{n=0}^{\infty} dp(3n+1)q^{n} & = \frac{f_{2}f_{6}^3}{f_1^2f_{4}^3}. \label{eq3n+1}
\end{align}

Now we use \eqref{eq2} to extract the odd parts of \eqref{eq3n+1}, which gives us
\begin{align}
\sum_{n=0}^{\infty} dp(6n+4)q^{n} & = 2\frac{f_3^3f_8^2}{f_1^4f_2f_4} \nonumber \\
& \equiv 2f_2^3 f_3^3\pmod{4}. \label{eq28.5.1}
\end{align}

Using \eqref{Jacobi}, it follows from \eqref{eq28.5.1} that
\begin{align*}
\sum_{n=0}^{\infty} dp(6n+4)q^{n} & \equiv 2f_2^3f_3^3 \pmod{4}\\
& = 2 \sum_{k,l=1}^{\infty} (-1)^{k+l}(2k+1)(2l+1)q^{k(k+1)+3l(l+1)/2} \\
& \equiv 2\sum_{k,l=1}^{\infty} q^{k(k+1)+3l(l+1)/2} \pmod{4}.
\end{align*}
Thus, 
\begin{align*}
\sum_{n=0}^{\infty} dp(6n+4)q^{24n+15} &  \equiv 2\sum_{k,l=1}^{\infty} q^{6(2k+1)^2+(6l+3)^2} \pmod{4}.
\end{align*}
It follows that $dp(6n+4) \equiv 0 \pmod{4}$ if $24n+15$ is not of the form $6x^2+y^2$. 

Since $p \equiv 17 \mbox{ or } 23 \pmod{24}$, it follows that $\displaystyle \left( \frac{-6}{p}\right) = -1$, which implies that $\nu_p(N)$ is even if $N$ is of the form $6x^2+y^2$. Taking $n = p^{2k+1}m + 5({p^{2k+2}-1})/{8}$, we have 
$$24n+15 = 24p^{2k+1}m + 15p^{2k+2} = p^{2k+1}(24m+15p).$$
Therefore, $\nu_p(24n+15)$ is odd and the result follows by the fact that
$$6\left( p^{2k+1}m + \frac{5(p^{2k+2}-1)}{8} \right) + 4 = 6p^{2k+1}m + \frac{15p^{2k+2}+1}{4}.$$

\section{Proofs of Theorems \ref{Th3}--\ref{Th4}}
We now transition to proving Theorems \ref{Th3}--\ref{Th4} where the focus in on arithmetic properties modulo 3.  Let us begin by proving \eqref{c2}. It follows from \eqref{eq3n+1} that
\begin{align*}
\sum_{n=0}^{\infty} dp(3n+1)q^{n} & \equiv \frac{f_{2}f_{6}^3}{f_1^2f_{12}} \pmod{3}. 
\end{align*}
Thanks to \eqref{eq6}, we get
\begin{align*}
\sum_{n=0}^{\infty} dp(9n+7)q^{3n+2} \equiv 4q^2\frac{f_{6}^5f_{18}^3}{f_3^6f_{12}} \pmod{3}.
\end{align*}
Dividing by $q$ and replacing $q^3$ by $q$ yields 
\begin{align*}
\sum_{n=0}^{\infty} dp(9n+7)q^{n} \equiv \frac{f_{2}^5f_{6}^3}{f_1^6f_{4}} \equiv \frac{f_6^4f_2^2}{f_3^2f_4} \pmod{3}.
\end{align*}
Finally, we use \eqref{eq7} to get
\begin{align}
\sum_{n=0}^{\infty} dp(9n+7)q^{n} \equiv \frac{f_6^4}{f_3^2} \left( \frac{f_{18}^2}{f_{36}} - 2q^2\displaystyle\frac{f_{6}f_{36}^2}{f_{12}f_{18}} \right) \pmod{3},
\label{eq29.5.1}
\end{align}
from which \eqref{c2} follows.

We next consider the ``internal congruence'' given in Theorem \ref{internal_cong}.  It follows from \eqref{eq29.5.1} that
\begin{align*}
\sum_{n=0}^{\infty} dp(27n+7)q^{3n} & \equiv \frac{f_6^4f_{18}^2}{f_3^2f_{36}}  \pmod{3}.
\end{align*}
Replacing $q^3$ by $q$, we are left with
\begin{align*}
\sum_{n=0}^{\infty} dp(27n+7)q^{n} & \equiv \frac{f_2^4f_{6}^2}{f_1^2f_{12}} \\
& \equiv \frac{f_2f_{6}^3}{f_1^2f_{4}^3} \pmod{3}.
\end{align*}
Thus, it follows by \eqref{eq3n+1} that
\begin{align*}
\sum_{n=0}^{\infty} dp(27n+7)q^{n}  \equiv \sum_{n=0}^{\infty} dp(3n+1)q^{n} \pmod{3},
\end{align*}
which completes the proof.

Lastly, we use the previous two theorems to prove Theorem \ref{Th4}.
We prove this theorem by induction on $\alpha$.  Note that the case $\alpha=1$ has already been established in Theorem \ref{Th3} above.  We now use it as our base case in a proof by induction.   
Thus, we assume that, for some $\alpha\geq 1$ and all $n\geq 0$, 
$$
dp \left( 3^{2\alpha + 1}n + \frac{7 \cdot 9^\alpha + 1}{4} \right) \equiv 0 \pmod{3}.
$$
We then want to prove that 
$$
dp \left( 3^{2\alpha + 3}n + \frac{7 \cdot 9^{\alpha+1} + 1}{4} \right) \equiv 0 \pmod{3}.
$$

We note that
\begin{align*}
dp \left( 3 \left( 3^{2\alpha}n + \frac{3\cdot 7 \cdot 9^{\alpha-1} -1}{4} \right) +1 \right) & = dp \left(  3^{2\alpha+1}n + \frac{7 \cdot 9^{\alpha} -3}{4} +1 \right) \\
& = dp \left(  3^{2\alpha+1}n + \frac{7 \cdot 9^{\alpha} +1}{4} \right) \\
& \equiv 0 \pmod{3},
\end{align*}
by the induction hypothesis. Finally, by Theorem \ref{internal_cong}, we have
\begin{align*}
dp \left( 3^{2\alpha + 3}n + \frac{7 \cdot 9^{\alpha+1} + 1}{4} \right) & = dp \left( 3^{2\alpha + 3}n + \frac{7 \cdot 9^{\alpha+1} -27}{4} + 7 \right) \\
& = dp \left( 27 \left( 3^{2\alpha}n + \frac{3 \cdot 7 \cdot 9^{\alpha-1} - 1}{4} \right) + 7 \right) \\
& \equiv dp \left( 3 \left( 3^{2\alpha}n + \frac{3\cdot 7 \cdot 9^{\alpha-1} -1}{4} \right) +1 \right) \pmod{3}\\
& \equiv 0 \pmod{3},
\end{align*}
which concludes the inductive proof.

\end{document}